\newtheorem{tpcl}{Tpcl}[section]
\newtheorem{theorem}[tpcl]{Theorem}
\newtheorem{definition}[tpcl]{Definition}
\newtheorem{example}[tpcl]{Example}
\begin{document}

\markboth{X. Pang and J. Wang}{A new   regularization method for Fredholm  integral equation}
\title{A new iterated Tikhonov regularization method for Fredholm  integral equation of first kind}

\author{Xiaowei Pang$^*$}
     \address{Department of Mathematics, Hebei Normal University,
     Shijiazhuang, Hebei, China.}
     \address{Hebei Key Laboratory of Computational Mathematics and Applications, Hebei,China}
     \email{pangxw21@hebtu.edu.cn}
\author{Jun Wang}
     \address{Department of Mathematics, Hebei Normal University,
     Shijiazhuang, Hebei, China.}
          \address{Hebei Research Center of the Basic Discipline Pure Mathematics,
           Hebei, China.}
     \email{wjun@hebtu.edu.cn}

\begin{abstract}

We consider Fredholm integral equation of the first kind,  present an  efficient   new iterated Tikhonov method  to solve it.  The new  Tikhonov iteration method has been proved which can achieve  the optimal order under a-priori assumption. In numerical experiments,  the new iterated Tikhonov regularization method  is compared with the classical iterated Tikhonov method, Landweber iteration method to solve the corresponding discrete problem, which indicates the validity and efficiency of the proposed method.

\end{abstract}

\keywords{Fredholm integral equation, Tikhonov regularization,  optimal order, discrete ill-posed
     problem,  Landweber iteration.}

 \subjclass{~65F10~$\cdot$~65F22}

\maketitle

\section{Introduction}
In recent years, the research about inverse problems or ill-posed problems draw scientists' attention. It can be emerged in earth physics, engineering technology  and many other fields, such as geophysical problems \cite{SZD17}, resistivity inversion problem \cite{NGMM21}, and computed tomography \cite{PHS23}. Therefore, the investigation of ill-posed problem not only has great scientific innovation significance, but also has certain  practical importance. 

In general, the inverse problem is much more difficult to solve than the forward problem, owing to its ill-posed feature. In the mid-1960s, the regularization method for dealing with ill-posed problems proposed by Tikhonov, brought the study of ill-posed problems into a new stage. Later, Landweber rewrote the equation (\ref{eq:kxy}) into an iteration form. Afterwards, many other technologies applied to regularization method came along successively, including  precondition technique \cite{L24}, adding contraction or penalty \cite{CXSLW22}, multi-parameter regularization methods \cite{GR16}, filter based methods \cite{HR11},  methods coupling of them \cite{IJT14} or other methods \cite{M13,MK15}.  Klann etal. \cite{KR08} and Hochstenbach etal. \cite{HR11} discussed  measuring the residual error  in Tikhonov regularization with a seminorm that uses a fractional power of the
Moore-Penrose pseudo inverse of  $A^{T}A$ as weighting matrix, which lead to  fractional filter methods. The former gave the fractional Landweber method and the latter presented fractional Tikhonov method.  In \cite{HS12}, Huckle and Sedlacek also derived seminorms for the Tikhonov–Phillips regularization based on the underlying blur operator, that is using discrete smoothing-norms of the form $\|Lx\|_{2}$ to substitute the classical 2-norm $\|x\|_{2} $ for obtaining regularity, with $L$ being a discrete approximation to a derivative operator.  Using a differential operator in the Tikhonov functional , it will be smoother and get a more
accurate reconstruction. In \cite{GR16}, Gazzola and Reichel proposed  two multi-parameter regularization methods for  linear discrete ill-posed problems, which  are based on the projection of a Tikhonov-regularized problem onto Krylov subspaces
of increasing dimension. By selecting a proper set of regularization parameters and maximizing a suitable quantity, they can get the approximate solution.  Stefano etal. proposed a nested
primal–dual method for the efficient solution of regularized convex optimization problem in \cite{ABDPR24}, under a relaxed monotonicity assumption on the
scaling matrices and a shrinking condition on the extrapolation parameters, they gave the convergence result for the iteration sequence. Up to now, regularization methods are still  powerful tools to settle inverse problems. 

In this paper, our goal is to give a new iterated regularization method based on  (\ref{eq:newfilter}), for solving   Fredholm integral equation  of the first kind.  This paper is organized as follows.  In Section \ref{sec2}, we recall some basic definitions  and preliminaries about the classical Tikhonov and Landweber method, the  filter based
regularization methods and the optimal order of a regularization method. In Section \ref{sec3}, we present a new iterated Tikhonov method and give the convergence result. Some numerical examples are
reported in Section \ref{sec4}. Finally, Section \ref{sec5} gives the conclusion.

\section{Preliminaries}\label{sec2}
\quad  Fredholm integral equation  of the first kind
will be  reviewed in this section firstly. Then, 
we recall some classical results about Tikhonov method and Landweber iteration. 
\subsection{Fredholm integral equation of the first kind}\label{ssec:pre}
Many  mathematical physics inverse problems, such as the backwards heat equation problem \cite{AP99} and the image restoration problem \cite{MP23},  can be reduced to the following Fredholm integral equation  of the first kind
\begin{equation}\label{fre}
\int_{a}^{b}K(s,t)x(s)ds=y(t),~~s\in [a,b],
\end{equation} 
where $a,b$ is finite or infinite, and $x(s)$ is unknown, $K(s,t)\in C(a,b)$ is known as a kernel function. If $K(s,t)$ is a continuous kernel function,  
 (\ref{fre})  will be  written as the linear operator form
\begin{equation}\label{eq:kxy}
Kx=y.
\end{equation} 
As we all know, (\ref{fre}) is ill-posed, that is to say, at least one of the existence, uniqueness and stability of the solution is not satisfied. Here it mainly refers to instability, that is, small perturbations in the data on the right hand side will lead to infinite variations in the solution. We consider the following example to illustrate.
\begin{example}
\begin{equation*}
\int_{0}^{1}(1+ts)e^{ts}x(s)ds=y(t)=e^{t},~~0\leq t\leq 1.
\end{equation*}
This equation has the unique solution  $x(t)=1$. If we use  the Simpson’s rule to approximate the integral, and the step size $h=\frac{1}{n}$, then we can get the linear  system of equations
\begin{equation*}
\sum_{j=0}^{n}w_{j}(1+t_{i}t_{j})e^{t_{i}t_{j}}x(t_{j})=y(ih),~~i=0,1,\cdots,n,
\end{equation*}
where $w$ denotes the corresponding weight vector.  Table~\ref{one} presents the error about $x(ih)-x_{i}$ in different nodal point.
\begin{table}[!htb]
    \centering
        \caption{The error between numerical solution and true solution at  different points} \label{one}
    \begin{tabular}{||c|cccc||}
        \hline
          $t$&$n=4$&$n=8$&$n=164$&$n=32$\\
          \hline
        $0$&$-0.0774$  &$ -0.1667$ & $-4.9063$&$12$\\
        
        $\frac{1}{4}$&$1.0765$ &$ -0.4535$ & $-13.0625$&$-32$ \\
      
        $\frac{1}{2}$&$0.7730$& $-2.0363$ & $16.5000$&$13$ \\
        
        $\frac{3}{4}$&$1.0749$&$-0.4393$&$-2$&$-12$\\
       
        $1$&$0.9258$&$0.8341$&$-0.4063$&$19$\\
        \hline
    \end{tabular}

 \end{table}
 
\end{example}

From the above data, we can see that the error is not  decreasing  as the improvement of the calculation accuracy of the left integral term. It is dangerous to perform numerical calculations at this time. As we stated before, the error of the measure data  is used to be inevitable, and we can't ignore the rounding errors about the computer. So it is difficult to obtain stable numerical solutions for such problems. Based on the above reasons,  a stabled method must be adopted---regularization method.

\subsection{Tikhonov method and Landweber iteration method}
The traditional Tikhonov regularization \cite{K11}
 solves the following minimization problem 
\begin{equation}
\min_{x}J_{\alpha}^{\delta}(x):=\|Kx-y^{\delta}\|^{2}+\alpha\|x\|^{2},~~x\in X.
\end{equation}
If the operator $K:X\rightarrow Y$ is linear and bounded, the regularization $\alpha>0$, then the unique minimum $x^{\alpha,\delta}$ of $J_{\alpha}^{\delta}$ is also the unique solution of the normal equation
\begin{equation}
\left(\alpha I+K^*K\right)x^{\alpha,\delta}=K^*y.
\end{equation}
Let $(\mu_{j},x_{j},y_{j})$ be a singular system for $K$, then the solution of $Kx=y$ is presented by
\begin{equation}\label{eq:picard solution}
x=\sum\limits_{j=1}^{\infty}\frac{1}{\mu_{j}}(y,y_{j})x_{j}.
\end{equation}
By the way, Tikhonov method gave a strategy 
\begin{equation}\label{eq:Tik filter}
q(\alpha,\mu)=\frac{\mu^{2}}{\alpha+\mu^{2}}
\end{equation}
to damp the factor $\frac{1}{\mu_{j}}$ of (\ref{eq:picard solution}). The function $q(\alpha,\mu):(0,\infty)\times(0,\|K\|]\rightarrow \mathbb{R}$ is called as a regularizing filter function. Based on these information, Tikhonov regularization strategy $R_{\alpha}:Y\rightarrow X,\alpha>0$ is defined by 
\begin{equation}
R_{\alpha}y=\sum\limits_{j=1}^{\infty}\frac{1}{\mu_{j}}\frac{\mu_{j}^{2}}{\alpha+\mu_{j}^{2}}(y,y_{j})x_{j},~~~y\in Y.
\end{equation}
Another methodology to give a regularizing filter function is
\begin{equation}\label{eq:Lan filter}
q(\alpha,\mu)=1-(1-a\mu^{2})^{\frac{1}{\alpha}},
\end{equation} 
if $\frac{1}{\alpha}=m$, and $m$ represents the iterations, then (\ref{eq:Lan filter}) is the filter function of the Landweber iteration 
\begin{equation}\label{eq:Land}
x^{0}:=0,~~x^{m}=(I-aK^*K)x^{m-1}+aK^*y.
\end{equation}
(\ref{eq:Tik filter}) and (\ref{eq:Lan filter}) are regularization filter functions, both of them satisfy the following definition.
\begin{definition}\cite{BBDS15}
Let $K:X\rightarrow Y$ be compact with singular system $(\mu_{j},x_{i},y_{j})$,  $\mu(K)$  be the closure of $\bigcup\limits_{j=1}^{\infty}\{\mu_{j}\}$,
and $q:\mu(K)\subset(0,\mu_{1})\rightarrow\mathbb{R}$ be a function  with the following properties:
\begin{subequations}
\begin{align}
&\sup_{\mu_{j}>0}|\frac{q(\alpha,\mu_{j})}{\mu_{j}}|=c(\alpha)<\infty,\label{eq:A1}\\
&|q(\alpha,\mu_{j})|\leq c<\infty,~~\text{c is independent of $\alpha, j$},\label{eq:A2}\\
&\lim_{\alpha\rightarrow 0}q(\alpha,\mu_{j})=1~\text{pointwise in $\mu_{j}$}\label{eq:A3}.
\end{align}
\end{subequations}

 Let $R_{\alpha}:Y\rightarrow X$ be a family operators, $\alpha>0$, which is defined by
\begin{equation}
R_{\alpha}y=\sum\limits_{j=1}^{\infty}\frac{q(\alpha,\mu_{j})}{\mu_{j}}(y,y_{j})x_{j},~~~y\in Y,
\end{equation}
then  it is a regularization strategy or a filter based regularization method with $\|R_{\alpha}\|=c(\alpha)$, and $q(\alpha,\mu)$ is called a filter function.
\end{definition}
In addition, regularization operators corresponding to (\ref{eq:Tik filter}) and (\ref{eq:Lan filter}) are optimal order under an a-priori assumption \cite{BD17}, or are optimal strategies in the sense of the worst-case error \cite{K11}. For  the integrity of the article, we give the definition. Next to the definition, there is a sufficient theorem to realize the optimal convergence rate. 
\begin{definition}
For given $\sigma,E>0$, let 
\begin{equation*}
X_{\sigma,E}:=\{x\in X|~ \exists~z\in X,~\|z\|\leq E,~x=\left(K^*K\right)^{\frac{\sigma}{2}}z\}\subset X.
\end{equation*}
Define 
\begin{equation*}
\mathcal{F}(\delta,\sigma,R_{\alpha}):=sup\{\|x-x^{\alpha,\delta}\|:~x\in X_{1},~\|y-y^{\delta}\|\leq \delta\},
\end{equation*}
for any $X_{1}\subset X$ a subspace, $\delta>0$, and for a regularization method $R_{\alpha}$, if
\begin{equation*}\label{eq:worstcaseerr}
\mathcal{F}(\delta,\sigma,R_{\alpha})\leq c\delta^{\frac{\sigma}{\sigma+1}}E^{\frac{1}{\sigma+1}}
\end{equation*}
holds, then a regularization method $R_{\alpha}$ is called of optimal order under the a-priori assumption $x\in X_{\sigma,E}$.
If $E$ is unknown, then redefine a set
\begin{equation*}
X_{\sigma}:=\bigcup_{\sigma>0}X_{\sigma,E},
\end{equation*}
and if  
\begin{equation*}
\mathcal{F}(\delta,\sigma,R_{\alpha})\leq c\delta^{\frac{\sigma}{\sigma+1}},
\end{equation*}
holds, then we call a regularization method $R_{\alpha}$ is of optimal order under the a-priori assumption $x\in X_{\sigma}$.
\end{definition} 
\begin{theorem}\cite{L89}
Let $K:X\rightarrow Y$  be a linear compact operator, $R_{\alpha}:Y\rightarrow X$ is a filter based regularization method, it will be of optimal order under the a-priori assumption $x\in X_{\sigma,E}$, $\sigma,E>0$, 
\begin{subequations}
\begin{align}
\label{eq:B1}&\sup_{0<\mu\leq \mu_{1}}|\frac{q(\alpha,\mu)}{\mu}|\leq c\alpha^{-\gamma},\\ 
\label{eq:B2}&\sup_{0<\mu\leq\mu_{1}}|(1-q(\alpha,\mu))\mu^{\sigma}|\leq c_{\sigma}\alpha^{\gamma\sigma},
\end{align}
\end{subequations}
with the regularization parameter $\alpha=\hat{c}\left(\frac{\delta}{E}\right)^{\frac{1}{\gamma(\sigma+1)}}$, $\hat{c}=\left(\frac{c}{\sigma c_{\sigma}}\right)^{\frac{1}{\gamma(\sigma+1)}}>0$. 
\end{theorem}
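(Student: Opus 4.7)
The plan is the classical bias--variance decomposition for a filter based regularization method, followed by a one-parameter balancing argument. First I would write
\begin{equation*}
\|x - R_\alpha y^\delta\| \leq \|x - R_\alpha y\| + \|R_\alpha(y - y^\delta)\| \leq \|x - R_\alpha y\| + c(\alpha)\,\delta,
\end{equation*}
so the task reduces to bounding the approximation error $\|x - R_\alpha y\|$ and the noise propagation constant $c(\alpha)=\|R_\alpha\|$ separately in terms of $\alpha$. The noise term is immediate from (\ref{eq:B1}): expanding $R_\alpha$ in the singular system and using Parseval gives $c(\alpha)=\sup_{\mu_j}|q(\alpha,\mu_j)/\mu_j|\leq c\,\alpha^{-\gamma}$.

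For the approximation error I would exploit the source condition. Since $x\in X_{\sigma,E}$ we can write $x=(K^*K)^{\sigma/2}z$ with $\|z\|\leq E$, so $(x,x_j)=\mu_j^{\sigma}(z,x_j)$, and using $y=Kx$, hence $(y,y_j)=\mu_j(x,x_j)$, the singular value expansion yields
\begin{equation*}
x - R_\alpha y = \sum_{j=1}^{\infty}\bigl(1-q(\alpha,\mu_j)\bigr)(x,x_j)\,x_j = \sum_{j=1}^{\infty}\bigl(1-q(\alpha,\mu_j)\bigr)\mu_j^{\sigma}(z,x_j)\,x_j.
\end{equation*}
By Parseval's identity and (\ref{eq:B2}),
\begin{equation*}
\|x-R_\alpha y\|^2 \leq \Bigl(\sup_{0<\mu\leq\mu_1}\bigl|(1-q(\alpha,\mu))\mu^\sigma\bigr|\Bigr)^2\|z\|^2 \leq c_\sigma^{\,2}\,\alpha^{2\gamma\sigma}E^2.
\end{equation*}

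Combining the two estimates gives
\begin{equation*}
\|x - R_\alpha y^\delta\| \leq c_\sigma\,\alpha^{\gamma\sigma}E + c\,\alpha^{-\gamma}\delta.
\end{equation*}
The final step is to minimize the right-hand side in $\alpha>0$; setting the derivative to zero forces $\sigma c_\sigma\alpha^{\gamma(\sigma+1)}E = c\,\delta$, which is solved precisely by the choice $\alpha=\hat{c}\,(\delta/E)^{1/(\gamma(\sigma+1))}$ stated in the theorem. Substituting back, both terms simplify to a constant multiple of $\delta^{\sigma/(\sigma+1)}E^{1/(\sigma+1)}$, which is the claimed optimal rate.

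The main obstacle is not really an obstacle but a bookkeeping point: one must carefully verify that the exponents in the two terms collapse consistently after inserting $\alpha=\hat{c}(\delta/E)^{1/(\gamma(\sigma+1))}$, so that the $E$-dependence and $\delta$-dependence balance to give $\delta^{\sigma/(\sigma+1)}E^{1/(\sigma+1)}$ with a finite absolute constant. The optimality (in the sense of the worst-case error over $X_{\sigma,E}$) of this rate is classical and need not be re-established here; only the achievability under (\ref{eq:B1})--(\ref{eq:B2}) has to be shown, and that is exactly what the decomposition above delivers.
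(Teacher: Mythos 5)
Your proposal is correct. Note that the paper itself gives no proof of this theorem: it is quoted from \cite{L89} as a known result, and your bias--variance decomposition with the source condition $x=(K^*K)^{\sigma/2}z$, the bounds $\|R_\alpha\|\leq c\alpha^{-\gamma}$ from (\ref{eq:B1}) and $\|x-R_\alpha y\|\leq c_\sigma\alpha^{\gamma\sigma}E$ from (\ref{eq:B2}), and the balancing choice $\alpha=\hat{c}(\delta/E)^{1/(\gamma(\sigma+1))}$ is exactly the standard argument found in that reference (and in \cite{K11}). The only implicit step worth making explicit is that the source condition places $x$ in the closed span of the singular vectors $x_j$ (i.e.\ in $N(K)^{\perp}$), which is what justifies writing $x-R_\alpha y$ entirely through the singular value expansion with no null-space remainder; with that remark your chain of estimates delivers the uniform bound over $X_{\sigma,E}$ and $\|y-y^\delta\|\leq\delta$ required by the paper's definition of optimal order.
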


\section{Iterated Tikhonov regularization method}\label{sec3}

\quad In this section, we first look back the standard  Tikhonov iteration method, then introduce  a  new iterated Tikhonov regularization method, it is a generalization of the classical Tikhonov method.

The standard  Tikhonov iteration method is 
\begin{equation}\label{eq:iterTik}
x^{0,\alpha,\delta}=0,~~	\left(\alpha I+K^*K \right)x^{m+1,\alpha,\delta}=K^*y^{\delta}+\alpha x^{m,\alpha,\delta}.
\end{equation}
It can be shown that the corresponding regularization filter function is
\begin{equation}
	q^{m}(\alpha,\mu)=1-\left(\frac{\alpha}{\alpha+\mu^2}\right)^m,~~m=1,2,...
\end{equation}
	In \cite{BD17,HS12}, the authors introduced a Weighted-II Tikhonov method as the filter based method with the filter function  
	\begin{equation}
	q_{l}(\alpha,\mu)=\frac{\mu^{2}}{\mu^2+\alpha\left(1-\left(\frac{\mu}{\mu_{1}}\right)^{2}\right)^{l}},
	\end{equation}
	for $\alpha>0$ and $l\in \mathbb{N}$.
	Here, we also recall  a filter based method---the fractional Tikhonov method \cite{BBDS15,HR11} with filter function
	\begin{equation}
	q^{r}(\alpha,\mu)=\frac{\mu^{2r}}{\left(\alpha+\mu^{2}\right)^{r}},
	\end{equation}
	for $\alpha>0$ and $r\geq \frac{1}{2}$.
	Now, we can introduce a mixed method which combines the filter function of the fractional Tikhonov method and weighted-II Tikhonov method.
	\begin{definition}\label{def:Rlr}
	Fixing $q_{l}^{r}(\alpha,\mu)$ such that
	\begin{equation}\label{eq:newfilter}
	q^{r}_{l}(\alpha,\mu):=\frac{\mu^{2r}}{\left(\alpha\left(1-\left(\frac{\mu}{\mu_{1}}\right)^{2}\right)^{l}+\mu^{2}\right)^{r}},
	\end{equation}  we define the mixed method (fractional weighted Tikhonov method) as the filter based method
	\begin{equation}\label{eq:new Ra}
	R_{\alpha,l}^{r}y:=\sum\limits_{j=1}^{\infty}\frac{q_{l}^{r}(\alpha,\mu)}{\mu_{j}}(y,y_{j})x_{j}.
	\end{equation}
	\end{definition}
	It is clear that for $l=0$ and $r=1$, it becomes the classical Tikhonov method.
	\begin{theorem}
	Let $K:X\rightarrow Y$ be a linear compact operator with infinite dimensional range and $R_{\alpha,l}^{r}$ be the corresponding family mixed method operator. Then for every given $r\geq \frac{1}{2}$, $l\in \mathbb{N}$, $R_{\alpha,l}^{r}$ is a regularization method of optimal order under the a-priori assumption $x\in X_{\sigma,E}$ with $0<\sigma\leq 2$. Further, if the regularization parameter satisfies $\alpha=(\frac{\delta}{E})^{\frac{2}{\sigma+1}}$, then the best possible rate of convergence with respect to $\delta$ is $\|x-x^{\alpha,\delta,r}\|=\mathcal{O}(\delta^{\frac{2}{3}})$ with $\sigma=2$. Moreover, if $\|x-x^{\alpha,\delta}\|=\mathcal{O}(\alpha)$, then $x\in X_{2}$.
	\end{theorem}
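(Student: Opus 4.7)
The plan is to invoke the preceding optimal-order theorem (the one from \cite{L89} requiring (B1) and (B2)) with exponent $\gamma = 1/2$: once its two filter inequalities are verified for $q^{r}_{l}$ in the range $0 < \sigma \leq 2$, that theorem immediately yields optimal order under $x \in X_{\sigma,E}$, the displayed parameter choice $\alpha = (\delta/E)^{2/(\sigma+1)}$ coincides with its prescription $\hat{c}(\delta/E)^{1/(\gamma(\sigma+1))}$ at $\gamma = 1/2$, and substituting $\sigma = 2$ yields the rate $\mathcal{O}(\delta^{2/3})$. The converse assertion (saturation forces $x \in X_{2}$) will then be handled by a separate spectral expansion. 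Throughout I abbreviate $v(\mu) := (1-(\mu/\mu_{1})^{2})^{l} \in [0,1]$, so that $q^{r}_{l}(\alpha,\mu) = \mu^{2r}/(\alpha v(\mu) + \mu^{2})^{r}$.

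For (B1), split $(0,\mu_{1}]$ at $\mu_{1}/\sqrt{2}$. On $(0, \mu_{1}/\sqrt{2}]$ one has $v(\mu) \geq (3/4)^{l} > 0$, so the rescaling $\mu = \sqrt{\alpha v(\mu)}\,\tau$ rewrites
\[
\frac{\mu^{2r-1}}{(\alpha v(\mu)+\mu^{2})^{r}} = (\alpha v(\mu))^{-1/2}\,\frac{\tau^{2r-1}}{(1+\tau^{2})^{r}},
\]
and the second factor is bounded in $\tau > 0$ for every $r \geq 1/2$ (check its limits at $\tau \to 0$ and $\tau \to \infty$), producing a bound $\leq c\alpha^{-1/2}$. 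On $[\mu_{1}/\sqrt{2}, \mu_{1}]$ simply use $q^{r}_{l}(\alpha,\mu)/\mu \leq 1/\mu \leq \sqrt{2}/\mu_{1}$, which is absorbed into $c\alpha^{-1/2}$ over any bounded range of $\alpha$. This gives (B1) with $\gamma = 1/2$.

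The main step is (B2), where I use the $\mu$-dependent dichotomy $\mu^{2} \leq \alpha v(\mu)$ versus $\mu^{2} > \alpha v(\mu)$. In the first regime $\mu^{\sigma} \leq (\alpha v)^{\sigma/2} \leq \alpha^{\sigma/2}$ and $0 \leq 1 - q^{r}_{l} \leq 1$ immediately give $(1 - q^{r}_{l})\mu^{\sigma} \leq \alpha^{\sigma/2}$. In the second, set $t := \alpha v/\mu^{2} \in (0,1]$, so $q^{r}_{l} = (1+t)^{-r}$, and the elementary inequality $1 - (1+t)^{-r} \leq rt$ (valid since $r(1+t)^{-r-1} \leq r$) yields $1 - q^{r}_{l} \leq r\alpha v/\mu^{2}$; hence
\[
(1-q^{r}_{l})\mu^{\sigma} \leq r\alpha v\,\mu^{\sigma-2} \leq r\alpha v\,(\alpha v)^{(\sigma-2)/2} = r(\alpha v)^{\sigma/2} \leq r\alpha^{\sigma/2},
\]
using $\mu \geq \sqrt{\alpha v}$ with the nonpositive exponent $\sigma - 2 \leq 0$, and $v \leq 1$. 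Both regimes deliver (B2) with $\gamma \sigma = \sigma/2$ for any $0 < \sigma \leq 2$, uniformly in $r \geq 1/2$. The delicate point is the borderline $r = 1/2$: the naive subadditivity estimate $(a+b)^{r} \leq a^{r} + b^{r}$ for $r \leq 1$ only reaches the range $\sigma \leq 2r = 1$, so it is precisely the $\mu$-dependent split against $\alpha v(\mu)$, combined with the sharp inequality $1-(1+t)^{-r} \leq rt$, that recovers the full $\sigma \leq 2$.

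For the converse, expand in the singular basis:
\[
\|x - R^{r}_{\alpha,l}Kx\|^{2} = \sum_{j}\bigl(1 - q^{r}_{l}(\alpha,\mu_{j})\bigr)^{2}(x, x_{j})^{2}.
\]
For fixed $\mu_{j} > 0$, a first-order expansion of $(1+\alpha v(\mu_{j})/\mu_{j}^{2})^{-r}$ in $\alpha$ gives the pointwise limit $(1 - q^{r}_{l}(\alpha,\mu_{j}))/\alpha \to r\,v(\mu_{j})/\mu_{j}^{2}$ as $\alpha \downarrow 0$. Assuming $\|x - x^{\alpha,\delta}\| = \mathcal{O}(\alpha)$ (which in the noise-free limit reduces to $\|x - R^{r}_{\alpha,l}Kx\| = \mathcal{O}(\alpha)$), Fatou's lemma applied termwise to this nonnegative series yields
\[
r^{2}\sum_{j} v(\mu_{j})^{2}\mu_{j}^{-4}(x, x_{j})^{2} \leq \liminf_{\alpha \downarrow 0}\alpha^{-2}\|x - R^{r}_{\alpha,l}Kx\|^{2} < \infty.
\]
Since $K$ is compact with infinite-dimensional range, $\mu_{j} \downarrow 0$ and hence $v(\mu_{j}) \to 1$, so only finitely many indices can make $v(\mu_{j})$ small; this forces $\sum_{j}(x, x_{j})^{2}/\mu_{j}^{4} < \infty$, which is exactly $x \in \mathrm{range}(K^{*}K) = X_{2}$.
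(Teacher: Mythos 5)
Your proof is correct, but it takes a genuinely different route from the paper's. The paper disposes of the theorem almost entirely by citation: it notes that $q_{l}(\alpha,\mu)$ is the weighted-II Tikhonov filter already shown in \cite{BD17} to satisfy (\ref{eq:A1})--(\ref{eq:A3}), then appeals to Proposition 12 of \cite{BBDS15} to transfer the filter conditions and (\ref{eq:B1})--(\ref{eq:B2}) to the fractional power $q_{l}^{r}$ with $\gamma=\tfrac{1}{2}$, and it gives no argument at all for the final assertion that $\|x-x^{\alpha,\delta}\|=\mathcal{O}(\alpha)$ forces $x\in X_{2}$. You instead verify (\ref{eq:B1}) and (\ref{eq:B2}) directly by elementary estimates --- the rescaling $\mu=\sqrt{\alpha v(\mu)}\,\tau$ for (\ref{eq:B1}), and the split $\mu^{2}\le\alpha v(\mu)$ versus $\mu^{2}>\alpha v(\mu)$ combined with $1-(1+t)^{-r}\le rt$ for (\ref{eq:B2}) --- which makes the optimal-order claim self-contained and makes the uniformity in $r\ge\tfrac{1}{2}$ and $0<\sigma\le 2$ transparent; and you supply the missing Fatou-type spectral argument for the converse, which is a real addition relative to the paper's proof. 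Two small blemishes, neither fatal: on $(0,\mu_{1}/\sqrt{2}]$ the correct lower bound is $v(\mu)\ge(1/2)^{l}$, not $(3/4)^{l}$ (harmless, any positive constant suffices); and in the converse you should note that the $\mathcal{O}(\alpha)$ decay also forces the component of $x$ in $N(K)$ to vanish (otherwise $\|x-R^{r}_{\alpha,l}Kx\|$ cannot tend to zero), after which your expansion over the singular system and the conclusion $\sum_{j}\mu_{j}^{-4}(x,x_{j})^{2}<\infty$, i.e.\ $x\in\mathrm{range}(K^{*}K)=X_{2}$, stand. It would also be worth stating explicitly that (\ref{eq:A2})--(\ref{eq:A3}) hold trivially since $0\le q^{r}_{l}\le 1$ and $q^{r}_{l}\to 1$ pointwise as $\alpha\to 0$, so that $R^{r}_{\alpha,l}$ is indeed a regularization method and not merely of optimal order.
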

\begin{proof}
It has been  proved $q_{l}(\alpha,\mu)$ is a filter function in \cite{BD17}, so  $q_{l}(\alpha,\mu)$ satisfies the filter function conditions (\ref{eq:A1}-\ref{eq:A3}). By Proposition 12 of \cite{BBDS15}, for $r\geq \frac{1}{2}$, the function $q_{l}^{r}(\alpha,\mu)$ which meets the condition (\ref{eq:A1}-\ref{eq:A3}) can be  verified  easily. The proof of $Q_{l}^{r}(\alpha,\mu)$ can meet the requirements (\ref{eq:B1}-\ref{eq:B2}) combining the proof of Proposition 12 in \cite{BBDS15}. The difference is that $Q_{\alpha}^{1}(\alpha,\mu)$ is the weighted-II Tikhonov method, and it also has the optimal order $\mathcal{O}(\delta^{\frac{2}{3}})$ with $\gamma=\frac{1}{2}$ in (\ref{eq:B2}) for every $0<\sigma\leq 2$.
\end{proof}
	
	In the following,  we will discuss the saturation for the mixed Tikhonov regularization.
	\begin{theorem}
	Let $K:X\rightarrow Y$ be a linear compact operator with  infinite dimensional range and let $R_{\alpha,l}^{r}$ be the  corresponding
	family of fractional Tikhonov regularization operators in Definition \ref{def:Rlr} with $r\geq \frac{1}{2}$, $l\in \mathbb{N}$. Let $\alpha=\alpha(\delta,y^{\delta})$ be any parameter choice rule, and if 
	\begin{equation}
	\sup\{\|x-x^{\alpha,\delta,r}\|:\|P(y-y^{\delta})\|\leq\delta\}=o\left(\delta^{\frac{2}{3}}\right), 
	\end{equation}
	then $x=0$ with $P$ is the orthogonal projector onto $\overline{R(K)}$.
		\end{theorem}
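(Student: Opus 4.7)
The plan is to adapt the standard saturation argument for filter-based regularization methods (as in, e.g., \cite{K11,BBDS15}) to the fractional weighted filter $q_{l}^{r}$. The guiding intuition is that any parameter choice rule has to balance the bias against the noise amplification, and the structure of $q_{l}^{r}$ at qualification $\sigma=2$ (verified by condition~\eqref{eq:B2} in the preceding theorem) makes this balance sharp at the rate $\delta^{2/3}$; a strictly faster rate can only be achieved if the singular coefficients of $x$ already vanish.

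First I would expand the error in the singular system $(\mu_{j},x_{j},y_{j})$. For any $y^{\delta}=y+\delta\eta$ with $\eta\in\overline{R(K)}$ and $\|\eta\|\leq 1$,
\begin{equation*}
x - R_{\alpha,l}^{r}y^{\delta}
= \sum_{j}\Bigl[\bigl(1-q_{l}^{r}(\alpha,\mu_{j})\bigr)(x,x_{j})
- \delta\,\tfrac{q_{l}^{r}(\alpha,\mu_{j})}{\mu_{j}}(\eta,y_{j})\Bigr]x_{j}.
\end{equation*}
I would then test the rule on the extremal data $y^{\pm}_{j}:=y\pm\delta y_{j}$, denoting $\alpha^{\pm}_{j}:=\alpha(\delta,y^{\pm}_{j})$, and project the resulting errors onto $x_{j}$. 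Choosing the sign so that the bias and noise contributions reinforce yields
\begin{equation*}
\omega(\delta)^{2}\;\geq\;\bigl(1-q_{l}^{r}(\alpha^{\pm}_{j},\mu_{j})\bigr)^{2}(x,x_{j})^{2}
+ \delta^{2}\,\frac{q_{l}^{r}(\alpha^{\pm}_{j},\mu_{j})^{2}}{\mu_{j}^{2}},
\end{equation*}
where $\omega(\delta)$ denotes the worst-case quantity in the hypothesis.

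Under the assumption $\omega(\delta)=o(\delta^{2/3})$, each summand above must itself be $o(\delta^{4/3})$, which forces the pointwise constraints $(1-q_{l}^{r}(\alpha^{\pm}_{j},\mu_{j}))\,|(x,x_{j})|=o(\delta^{2/3})$ and $\delta\,q_{l}^{r}(\alpha^{\pm}_{j},\mu_{j})/\mu_{j}=o(\delta^{2/3})$. An AM--GM step on the same sum gives $\omega(\delta)^{2}\geq 2\delta\,(1-q_{l}^{r})\,q_{l}^{r}\,|(x,x_{j})|/\mu_{j}$, and the qualification-$2$ bound on $(1-q_{l}^{r})\,q_{l}^{r}$ from~\eqref{eq:B2} will prevent this product from collapsing outside the trivial regime $(x,x_{j})=0$. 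Since $\{x_{j}\}$ is complete in $N(K)^{\perp}=\overline{R(K^{*})}$ and only this component of $x$ is resolved by the method, the vanishing of every coefficient forces $x=0$.

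The main obstacle I foresee is the adaptive dependence of $\alpha^{\pm}_{j}$ on the noise direction: the rule is free to produce any positive value as a function of $y^{\delta}$, so the values $q_{l}^{r}(\alpha^{\pm}_{j},\mu_{j})$ are a priori uncontrolled. The AM--GM step is what sidesteps this, because it depends only on the product $(1-q_{l}^{r})q_{l}^{r}$; one still has to exclude the two degenerate regimes $q_{l}^{r}\to 0$ and $q_{l}^{r}\to 1$ in which that product vanishes, and the simultaneous smallness constraints on bias and noise rule them out only along the trivial direction. Particular care is needed at the boundary $\mu_{j}=\mu_{1}$, where $q_{l}^{r}(\alpha,\mu_{1})\equiv 1$ for $l\geq 1$ and the bias vanishes identically at this index, so the component-wise argument has to be run only for indices with $\mu_{j}<\mu_{1}$ and then extended by completeness of $\{x_{j}\}_{j\geq 2}$ in the orthogonal complement of $x_{1}$ within $N(K)^{\perp}$.
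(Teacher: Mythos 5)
Your route is genuinely different from the paper's: the paper never reruns a saturation argument for $q_{l}^{r}$ from scratch, but sandwiches the residual filter, $\min\{1,r\}\,(1-q_{l}^{1}(\alpha,\mu))\leq 1-q_{l}^{r}(\alpha,\mu)\leq \max\{1,r\}\,(1-q_{l}^{1}(\alpha,\mu))$, via the monotone function $f(x)=\frac{(x+1)^{r}-x^{r}}{(x+1)^{r-1}}$ with $f(0)=1$, $f(\infty)=r$, and then transfers the known saturation statement for weighted-II Tikhonov ($r=1$) from \cite{BBDS15}. A self-contained componentwise proof would be welcome, but yours has a gap at its central step.

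The gap is that your fixed-$j$ constraints do not force $(x,x_{j})=0$. If the rule returns a very small $\alpha$ for the data $y\pm\delta y_{j}$, then $q_{l}^{r}(\alpha,\mu_{j})\to 1$: the bias term $(1-q_{l}^{r})|(x,x_{j})|$ is $o(\delta^{2/3})$ no matter what $(x,x_{j})$ is, the noise term is $\delta q_{l}^{r}/\mu_{j}\leq\delta/\mu_{j}=O(\delta)=o(\delta^{2/3})$ because $\mu_{j}$ is \emph{fixed}, and your AM--GM product $(1-q_{l}^{r})q_{l}^{r}|(x,x_{j})|/\mu_{j}$ is then automatically $o(\delta^{1/3})$; condition \eqref{eq:B2} bounds this product from above, not from below, so it cannot ``prevent the collapse.'' Hence the degenerate regime $q_{l}^{r}\to1$ is fully consistent with all your inequalities along a nontrivial direction, and no contradiction arises index by index. (A symptom: your argument never uses that $R(K)$ is infinite dimensional, yet for finite-rank $K$ the conclusion is false, since a constant-$\alpha$ rule already yields worst-case error $O(\delta)$.) A correct saturation proof must couple the perturbation direction to $\delta$: fix $j_{0}$ with $(x,x_{j_{0}})\neq0$ and $\mu_{j_{0}}<\mu_{1}$, use the bias at $j_{0}$ to force $\alpha(\delta,y^{\delta})=o(\delta^{2/3})$ uniformly over the perturbations used, then perturb along $y_{n}$ with $\delta_{n}:=\mu_{n}^{3}\to0$ (here compactness and infinite-dimensional range enter), so that $\alpha=o(\mu_{n}^{2})$ gives $q_{l}^{r}(\alpha,\mu_{n})\to1$ and a noise contribution of size $\delta_{n}/\mu_{n}\sim\delta_{n}^{2/3}$, contradicting $o(\delta_{n}^{2/3})$ because $(x,x_{n})\to0$. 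Finally, your remark about $\mu_{j}=\mu_{1}$ identifies a real obstruction rather than a technicality: for $l\geq1$ the filter is identically $1$ there, so no argument of this kind controls the component of $x$ on the $\mu_{1}$-eigenspace, and ``extending by completeness'' does not recover it; your scheme therefore cannot reach the asserted conclusion $x=0$ for that component, which is precisely the part the paper delegates to the $r=1$ result of \cite{BBDS15}.
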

\begin{proof}
For $r=1$, it is clear that the saturation result follows from  weight-II  Tikhonov regularization \cite{BBDS15}. For $r\neq 1$, we have
\begin{equation}
x-x_{r}^{\alpha,\delta}=\sum\limits_{j=1}^{\infty}\frac{1}{\mu_{j}}\left(1-q_{l}^{r}(\alpha,\mu_{j})\right)(y,y_{j})x_{j},
\end{equation}
and 
\begin{eqnarray*}
\begin{aligned}
1-q_{l}^{r}(\alpha,\mu)
&=\frac{\left(1+\frac{\mu^{2}}{\alpha\left(1-\left(\frac{\mu}{\mu_{1}}\right)^{2}\right)^{l}}\right)^{r}-\left(\frac{\mu^{2}}{\alpha\left(1-\left(\frac{\mu}{\mu_{1}}\right)^{2}\right)^{l}}\right)^{r}}{\left(1+\frac{\mu^{2}}{\alpha\left(1-\left(\frac{\mu}{\mu_{1}}\right)^{2}\right)^{l}}\right)^{r-1}}\cdot\left(1-q_{l}^{1}(\alpha,\mu)\right).
\end{aligned}
\end{eqnarray*}
We notice that the above equality will be 
\begin{equation*}
1-q_{l}^{r}(\alpha,\mu)=f\left(\frac{\mu^{2}}{a}\right)\cdot\left(1-q_{l}^{1}(\alpha,\mu)\right),
\end{equation*}
where  $f(x)=\frac{(x+1)^{r}-x^{r}}{(x+1)^{r-1}}$,  $a=\alpha\left(1-\left(\frac{\mu}{\mu_{1}}\right)^{2}\right)^{l}$ and $q_{l}^{1}(\alpha,\mu)=q_{l}(\alpha,\mu)$ is the weighted-II Tikhonov. $f(x)$ is a monotone function, it satisfies $f(0)=1$ and $\lim\limits_{x\rightarrow \infty}f(x)=r$. Hence, we can get
\begin{equation*}
\min\{1,r\}\left(1-q_{l}^{1}(\alpha,\mu)\right)\leq\left(1-q_{l}^{r}(\alpha,\mu)\right)\leq \max\{1,r\}\left(1-q_{l}^{1}(\alpha,\mu)\right).
\end{equation*}
Naturally, 
\begin{equation*}
\sup\{\|x-x^{\alpha,\delta,r}\|:\|P(y-y^{\delta})\|\leq\delta\}\geq\min\{1,r\}\cdot\sup\{\|x-x^{\alpha,\delta,1}\|:\|P(y-y^{\delta})\|\leq\delta\},
\end{equation*}
for every $y^{\delta}$ satisfies $\|y-y^{\delta}\|\leq \delta$. From Proposition 3.6 in \cite{BBDS15}, we have
\begin{equation*} \sup\{\|x-x^{\alpha,\delta,1}\|:\|P(y-y^{\delta})\|\leq\delta\} =o(\delta^{\frac{2}{3}}),
\end{equation*}
Hence, the conclusion follows  from the saturation result for Weighted-II Tikhonov (see Corollary 5.3 \cite{BBDS15}). 
\end{proof}
	From now on, we propose a  new iterated regularization method based on the above mixed Tikhonov. By iterations, we find that a large $m$ will provide a faster convergence rate (see Theorem \ref{thm:lrrate}).
	\begin{definition}
	Define the iterated fractional weighted Tikhonov method as

\begin{equation}\label{eq:newiter}
\resizebox{0.9\hsize}{!}{$
\left(K^*K+\alpha\left(I-\frac{K^*K}{\|K^*K\|}\right)^{l}\right)^{r}x^{m,\alpha}=(K^*K)^{r-1}K^*y
	+\left[\left(K^*K+\alpha\left(I-\frac{K^*K}{\|K^*K\|}\right)^{l}\right)^{r}-(K^*K)^{r}\right]x^{m-1,\alpha}	
	$}
\end{equation}
with $	x^{0,\alpha}:=0,~r\geq \frac{1}{2},\alpha>0$ and $l\in \mathbb{N}$.
	We can define $x^{m,\alpha,\delta}$ as the $m$-th iteration of (\ref{eq:newiter}) whenever $y$ is replaced by the noise data $y^{\delta}$.
	\end{definition}
	In whole paper, for convenience, (\ref{eq:newiter}) will be  called the new iterated Tikhonov method. 
	\begin{theorem}\label{thm:lrrate}
	The  new iterated Tikhonov  in (\ref{eq:newiter}) is a filter based regularization method with filter function
	\begin{equation}
	Q^{m,r}_{l}(\alpha,\mu)=1-\left(1-Q^{r}_{l}(\alpha,\mu)\right)^{m},
	\end{equation}
	with $Q^{r}_{l}(\alpha,\mu)=q_{l}^{r}(\alpha,\mu)=\left(\frac{\mu^{2}}{\mu^{2}+\alpha(1-(\frac{\mu}{\mu_{1}})^{2})^{l}}\right)^{r}$.
	Moreover, this method is of optimal order under the a-priori assumption $x\in X_{\sigma,E}$, for $l\in \mathbb{N}$ and $0\leq \sigma\leq 2m$. Further,  regularization parameter  $\alpha=\left(\frac{\delta}{E}\right)^{\frac{2m}{1+\sigma}}$,  yields the best convergence rate $\|x-x^{m,\alpha,\delta}\|\leq\mathcal{O}(\delta^{\frac{2m}{2m+1}})$ with $\sigma=2m$.
\end{theorem}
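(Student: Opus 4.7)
My plan is to proceed in four stages: derive the filter function from the iteration, verify the three filter conditions, verify the two optimality conditions, and then apply the earlier theorem to extract the rate.

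First, I would diagonalize \eqref{eq:newiter} against the singular system $(\mu_j, x_j, y_j)$. Since $\|K^*K\| = \mu_1^2$, on the $j$-th spectral component the operator $K^*K + \alpha(I - K^*K/\|K^*K\|)^{l}$ acts as multiplication by $\mu_j^2 + \alpha(1 - (\mu_j/\mu_1)^2)^{l}$. Projecting \eqref{eq:newiter} onto $x_j$ and writing $a_j = \alpha(1-(\mu_j/\mu_1)^2)^l$, the recursion becomes
\begin{equation*}
\xi_j^{m} = \frac{\mu_j^{2r-1}}{(\mu_j^2+a_j)^r}(y,y_j) + \Bigl(1 - \tfrac{\mu_j^{2r}}{(\mu_j^2+a_j)^r}\Bigr)\xi_j^{m-1}.
\end{equation*}
With $\xi_j^0 = 0$ and $Q_l^r(\alpha,\mu_j) = \mu_j^{2r}/(\mu_j^2+a_j)^r$, a telescoping geometric sum gives $\xi_j^m = \mu_j^{-1}\bigl[1-(1-Q_l^r(\alpha,\mu_j))^m\bigr](y,y_j)$, which is precisely the claimed form of $Q_l^{m,r}$.

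Next, I would check conditions \eqref{eq:A1}--\eqref{eq:A3}. Condition \eqref{eq:A2} is immediate from $Q_l^r \in [0,1]$, hence $Q_l^{m,r}\in[0,1]$. Condition \eqref{eq:A3} follows from the pointwise limit $Q_l^r(\alpha,\mu_j)\to 1$ already established for the single-step mixed method. For \eqref{eq:A1} I would use the algebraic identity
\begin{equation*}
Q_l^{m,r}(\alpha,\mu) = Q_l^r(\alpha,\mu)\sum_{k=0}^{m-1}(1-Q_l^r(\alpha,\mu))^k,
\end{equation*}
which (since the sum is bounded by $m$) yields $|Q_l^{m,r}/\mu| \leq m\,|Q_l^r/\mu|$. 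Because the earlier theorem has already shown $\sup_\mu |Q_l^r(\alpha,\mu)/\mu|\leq c\alpha^{-1/2}$, this simultaneously gives \eqref{eq:A1} and \eqref{eq:B1} with $\gamma = 1/2$.

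The core of the proof, and the step I expect to be the main obstacle, is \eqref{eq:B2} on the extended range $0\le\sigma\le 2m$. Here $1-Q_l^{m,r}(\alpha,\mu) = (1-Q_l^r(\alpha,\mu))^m$, so the task is to bound $(1-Q_l^r(\alpha,\mu))^m\mu^\sigma$. My strategy is to split $\mu^\sigma = (\mu^{\sigma/m})^m$ and factor the expression as $\prod_{i=1}^m\bigl[(1-Q_l^r(\alpha,\mu))\mu^{\sigma/m}\bigr]$. For $\sigma\in[0,2m]$, each factor has exponent $\sigma/m\in[0,2]$, so the single-step optimality bound from the earlier theorem applies to give $(1-Q_l^r(\alpha,\mu))\mu^{\sigma/m}\leq c_{\sigma/m}\alpha^{\sigma/(2m)}$, and multiplying the $m$ factors yields $(1-Q_l^r(\alpha,\mu))^m\mu^\sigma \leq c_{\sigma/m}^m\alpha^{\sigma/2}$. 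The $\sigma=0$ endpoint is trivial since $(1-Q_l^r)^m\leq 1$. The subtle point is verifying that the weight $(1-(\mu/\mu_1)^2)^l$ does not ruin the single-step bound near $\mu=\mu_1$; this is handled because $1-Q_l^r(\alpha,\mu)$ itself vanishes at $\mu=\mu_1$, a property already used in the preceding theorem.

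Finally, with \eqref{eq:B1} and \eqref{eq:B2} established for $\gamma=1/2$ and $\sigma\in[0,2m]$, the optimal-order theorem recalled in Section \ref{sec2} yields the a-priori choice of $\alpha$ and the bound $\|x-x^{m,\alpha,\delta}\|\leq c\delta^{\sigma/(\sigma+1)}E^{1/(\sigma+1)}$. Specializing to the largest admissible smoothness $\sigma=2m$ produces the advertised rate $\mathcal{O}(\delta^{2m/(2m+1)})$, completing the argument.
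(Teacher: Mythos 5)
Your proof is correct, and it follows the paper for most of the argument: you obtain the filter function $Q_l^{m,r}=1-(1-Q_l^r)^m$ by the same geometric-sum mechanism (you diagonalize the iteration spectrally, the paper unrolls it at the operator level — these are equivalent), and you verify (\ref{eq:A1})–(\ref{eq:A3}) and (\ref{eq:B1}) via the same inequality $Q_l^{m,r}\leq m\,Q_l^{r}$ with $\gamma=\tfrac12$. Where you genuinely diverge is the key estimate (\ref{eq:B2}) on the extended range $0\leq\sigma\leq 2m$: the paper bounds $1-Q_l^{r}\leq\max\{1,r\}\,(1-q_l)$ and then defers to the known order optimality of the stationary iterated weighted-II Tikhonov filter $Q_l^{m,1}$ (i.e.\ it imports the qualification-$2m$ property from the weighted-II theory of \cite{BD17}), whereas you factor
\begin{equation*}
\left(1-Q_l^{r}(\alpha,\mu)\right)^{m}\mu^{\sigma}=\Bigl[\left(1-Q_l^{r}(\alpha,\mu)\right)\mu^{\sigma/m}\Bigr]^{m}
\end{equation*}
and apply the single-step bound with exponent $\sigma/m\in[0,2]$ already established for the mixed method, getting $c_{\sigma/m}^{m}\alpha^{\sigma/2}$ directly. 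Your route is more self-contained (it needs only the single-step theorem proved earlier in the paper, with explicit constants), while the paper's route makes the link to the stationary iterated weighted-II method explicit but leans on an external saturation/qualification result; both yield $\gamma=\tfrac12$, $0\leq\sigma\leq 2m$, and the rate $\mathcal{O}(\delta^{\frac{2m}{2m+1}})$ at $\sigma=2m$. One small remark: the general optimal-order theorem with $\gamma=\tfrac12$ gives the parameter choice $\alpha\sim(\delta/E)^{\frac{2}{\sigma+1}}$ rather than the exponent $\frac{2m}{1+\sigma}$ written in the theorem statement; neither your argument nor the paper's proof derives the latter, and the convergence rate is unaffected.
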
	
\begin{proof}
	Denote $C=\left(K^*K+\alpha\left(I-\frac{K^*K}{\|K^*K\|}\right)^{l}\right)^{r}$, $B=C^{-1}(K^*K)^{r-1}K^*y$, and $A=C^{-1}\left[C-(K^*K)^{r}\right]$. By the iteration formulas (\ref{eq:newiter}), we have 
	\begin{eqnarray*}
	\begin{aligned}
	x^{m,\alpha,\delta}&=Ax^{m-1,\alpha,\delta}+B=A^{2}x^{m-2,\alpha,\delta}+(A^{1}+A^{0})B\\
	&=\cdot\cdot\cdot\\
	&=\sum\limits_{k=0}^{m-1}A^{k}B
	=\sum\limits_{k=0}^{m-1}C^{-k}\left[C-(K^*K)^{r}\right]^{k}C^{-1}(K^*K)^{r-1}K^*y.
	\end{aligned}
	\end{eqnarray*}
	Let $R_{\alpha}^{m}=\sum\limits_{k=0}^{m-1}C^{-k}\left[C-(K^*K)^{r}\right]^{k}C^{-1}(K^*K)^{r-1}K^*$, and  the singular system  be $\{\mu_{j},x_{j},y_{j}\}$, then 
	\begin{eqnarray*}
	\begin{split}
	\hspace{-5mm}R_{\alpha}^{m}y
	&=\sum\limits_{j=1}^{\infty}\frac{1}{\mu_{j}}Q_{l}^{m,r}(\alpha,\mu_{j})(y,y_{j})x_{j},
	\end{split}
	\end{eqnarray*}
	and 
	\begin{scriptsize}
	\begin{eqnarray*}
	\begin{split}
	\hspace{-5mm}Q_{l}^{m,r}(\alpha,\mu_{j})=\sum\limits_{k=0}^{m-1}\left(\frac{\left(\mu_{j}^{2}+\alpha\left(1-(\frac{\mu_{j}}{\mu_{1}})^{2}\right)^{l}\right)^{r}-\mu_{j}^{2r}}{\left(\mu_{j}^{2}+\alpha\left(1-(\frac{\mu_{j}}{\mu_{1}})^{2}\right)^{l}\right)^{r}}\right)^{k}\left(\frac{\mu_{j}^{2}}{\left(\mu_{j}^{2}+\alpha\left(1-\left(\frac{\mu_{j}}{\mu_{1}}\right)^{2}\right)^{l}\right)}\right)^{r}.
	\end{split}
	\end{eqnarray*}
	\end{scriptsize}
	By the definition of  $q_{l}(\alpha,\mu)$
,  then
	\begin{equation*}
	Q_{l}^{m,r}(\alpha,\mu)=\sum\limits_{k=0}^{m-1}\left(1-\left(q_{l}(\alpha,\mu)\right)^{r}\right)^{k}\left(q_{l}(\alpha,\mu)\right)^{r}.
	\end{equation*}
	It is easily to get $Q_{l}^{m,r}(\alpha,\mu)=1-\left(1-(q_{l}(\alpha,\mu))^{r}\right)^{m}$, that is the conclusion as we stated.   

From  the relationship between $Q_{l}^{m,r}(\alpha,\mu)$ and $Q_{l}^{r}(\alpha,\mu)$, we can deduce 
\begin{equation*}
Q_{l}^{r}(\alpha,\mu)\leq Q_{l}^{m,r}(\alpha,\mu)\leq mQ_{l}^{r}(\alpha,\mu).
\end{equation*}
	Clearly, $q_{l}(\alpha,\mu)$ is weighted-II Tikhonov and it is a regularization filter method. Hence,  $Q_{l}^{m,r}(\alpha,\mu)$ satisfies  the conditions (\ref{eq:A1}-\ref{eq:A2}) and (\ref{eq:B1}). At the same time, (\ref{eq:A3}) is easy to check,  so it is a filter function naturally. 
	 $Q_{l}^{m,r}(\alpha,\mu)$ adapt to the filter based regularization conditions. Finally, we make sure $Q_{l}^{m,r}(\alpha,\mu)$ satisfies condition (\ref{eq:B2}) for the order optimality.
\begin{eqnarray*}
\begin{aligned}
1-Q_{l}^{m,r}(\alpha,\mu)&=\left(1-Q_{l}^{r}(\alpha,\mu)\right)^{m}
\leq 1-Q_{l}^{r}(\alpha,\mu)\\
&\leq \max\{1,r\}^{m}(1-Q_{l}^{1}(\alpha,\mu))^{m}
=c\left(1-Q_{l}^{m,1}(\alpha,\mu)\right),
\end{aligned}
\end{eqnarray*}
and notice that $Q_{l}^{1}(\alpha,\mu)=q_{l}(\alpha,\mu)=\frac{\mu^{2}}{\mu^{2}+\alpha\left(1-(\frac{\mu}{\mu_{1}})^{2}\right)^{l}}$
  is the weighted-II filter function and $Q_{l}^{m,1}(\alpha,\mu)=1-\left(1-\frac{\mu^{2}}{\alpha\left(1-\left(\frac{\mu}{\mu_{1}}\right)^{2}\right)^{l}+\mu^{2}}\right)^{m}$ is the filter function  of the stationary iterated Tikhonov. So that condition (\ref{eq:B2}) follows from the properties of stationary Weighted-II iterated Tikhonov,  and $\gamma=\frac{1}{2}$, $0\leq \sigma\leq 2m$, therefore, we get the best convergence rate $\mathcal{O}(\delta^{\frac{2m}{2m+1}})$.
  \end{proof}

	 \section{Numerical experiments}\label{sec4}
	 The purpose of this  section is to illustrate the validity from the previous sections 
	 with the following  example.  The classical iterative Tikhonov regularization method,  Landweber method and the new iterated Tikhonov method are adopted to get the iterative numerical solutions.
	 
	 Consider the following
	 integral equation of the first kind:
	 \begin{equation}\label{eq:example}
	 	\int_{0}^{\infty}e^{-st}x(t)dt=h(s),~~0\leq t<\infty.
	 	\end{equation} 	
	 The kernel operator is given by $(Kx)(t)=\int_{0}^{\infty}e^{-st}x(s)ds$. For numerical computation, we use Gauss-Laguerre quadrature rule with $n$ points to get the matrix $A$ corresponding to the kernel. 
	  The measure data about the right-hand side function is denoted by $y^{\delta}=y+\delta\|\eta\|$,   where $\eta$ obeys the standard normal distribution, and the perturbation magnitude is $\delta$.
	 
	 In this example, the right-hand side function $h(s)=\frac{2}{2s+1}$, hence (\ref{eq:example}) has the unique solution $x(t)=e^{-\frac{t}{2}}$.  As mentioned above, we can use regularization method to solve the numerical solution.  The  operator $K$ is  self-adjoint, so discrete Tikhonov equation takes the following form
	 \begin{equation}\label{eq:Fredholm1}
	 	\left(\alpha I+A^{2}\right)x^{\alpha,\delta}=Ay^{\delta}.
	 \end{equation}
	 
	 \subsection{Classical method implementation}
	  First, let the perturbation $\delta=0$, that is only the discrete error by quadrature rule will be generated,  choose different  regularization parameter $\alpha=10^{-i},i=1,2,...,10$ by priori,  and  the quadrature points number $n=16,32$. The numerical discrete errors variation diagram $|x-x^{\alpha,\delta}|_{l^{2}}$ are showed in Figure~\ref{fig:alpha_error_delta0}.
	 From Figure~\ref{fig:alpha_error_delta0}, if $\alpha$ is small,   the error  has a big difference between $n=16$ and $n=32$ as $\alpha<10^{-4}$ especially.
\begin{figure}[htbp]
\centering
\subfigure[Numerical error for different regularization parameter $n$ in Tikhonov]{\label{fig:alpha_error_delta0}
\includegraphics[width=0.45\textwidth]{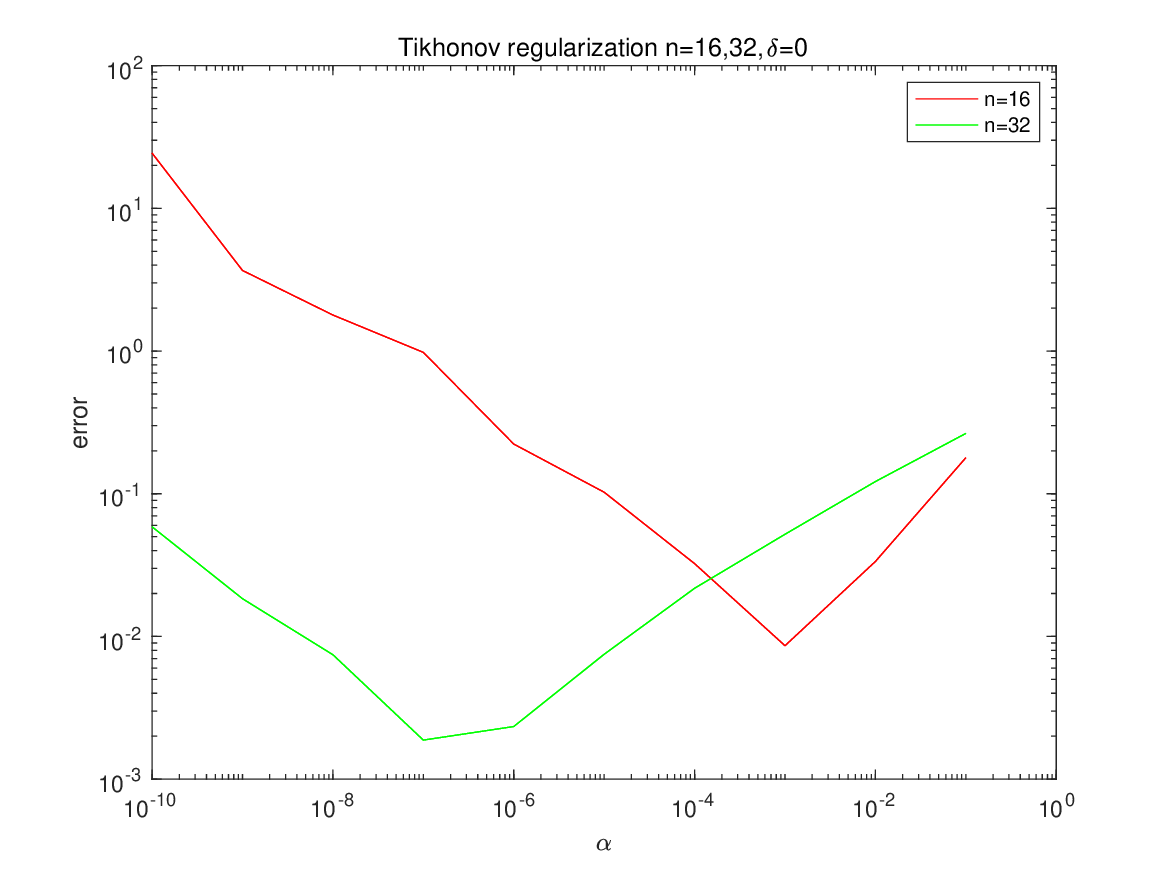}}\hfill
\subfigure[Numerical error for different perturbation parameter $\delta$ in Tikhonov]{\label{fig:alpha_error_n32_one}
\includegraphics[width=0.45\textwidth]{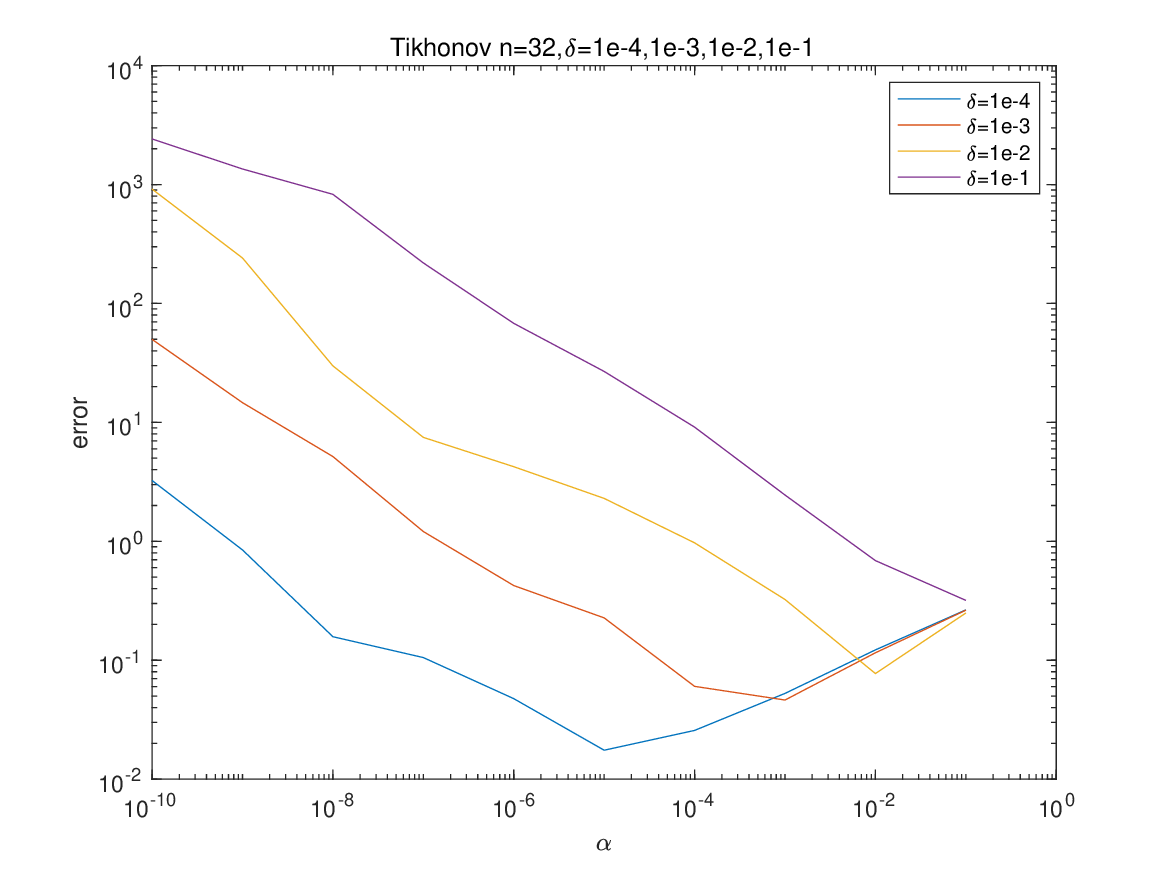}}\hfill
	 	\subfigure[Numerical error for different perturbation parameter $\delta$ in Landweber]{\label{fig:Landerr_m_n32_one}
	 	\includegraphics[width=0.45\textwidth]{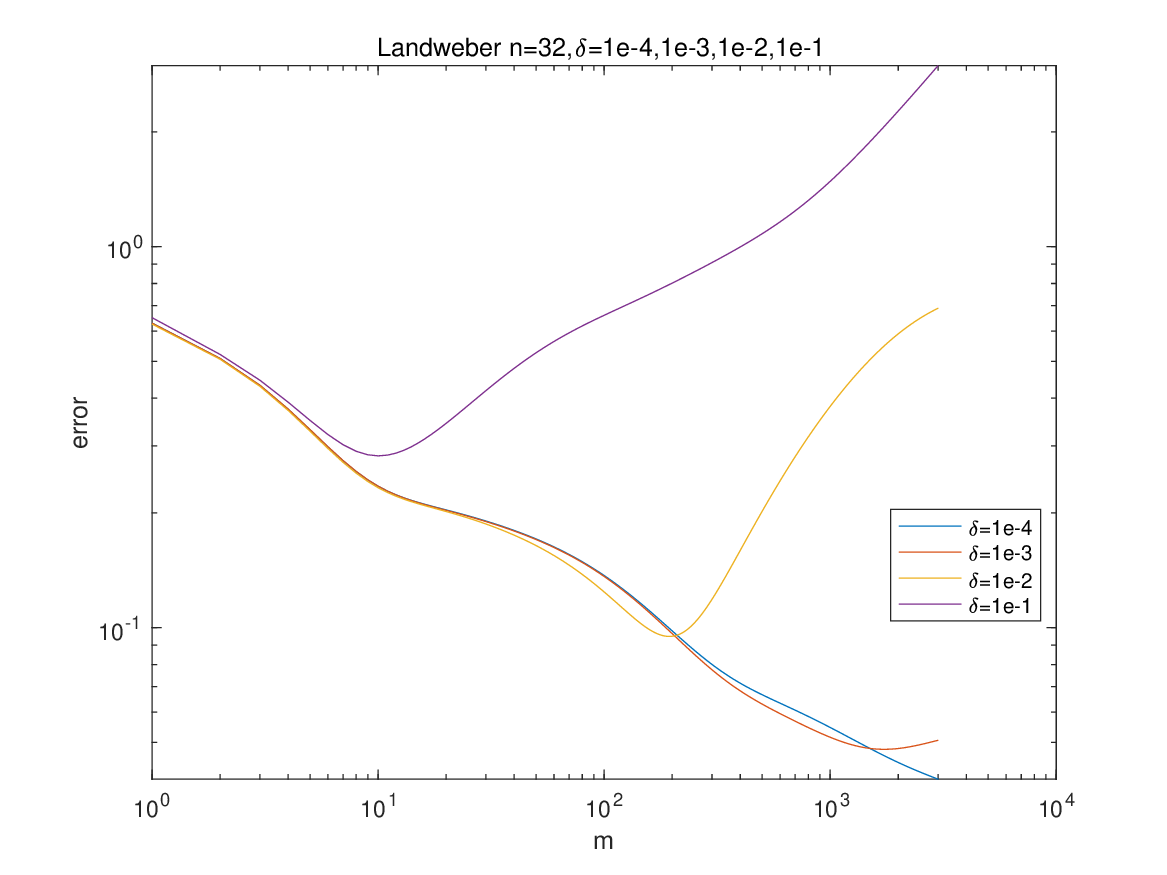}}\hfill
	 	\subfigure[The relationship between residual norm and solution norm for different perturbation parameter $\delta$ in Tikhonov]{\label{fig:residual_solution_delta_n32_one}\includegraphics[width=0.45\textwidth]{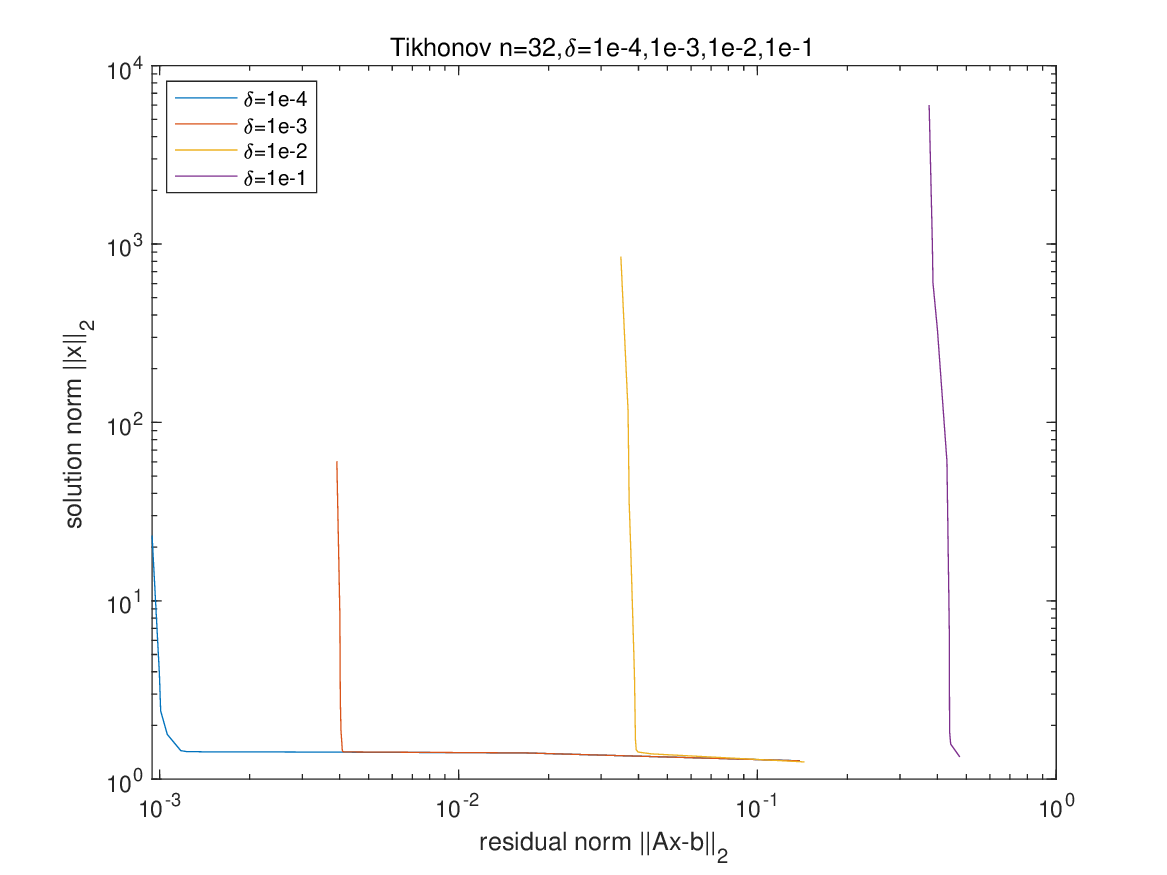}}\hfill
\caption{Numerical error for $n$ and $\delta$ in Tikhonov  and Landweber method}
\label{fig1}
\end{figure}

	 Next,  we consider the numerical error for different perturbation $\delta$ in Tikhonov method  (see Figure~\ref{fig:alpha_error_n32_one}),    and in Landweber method with $a=0.5$ to solve (\ref{eq:Fredholm1}) and iteration steps $m=1,2,...,3000$  (see Figure~\ref{fig:Landerr_m_n32_one}).  From the trends of the figures, they show that the numerical error  first decrease then increase as $\alpha$ or $m$ increase, this  coincides with the theory. Besides, we observe that both methods are comparable where
	 precision is concerned.

	   Figure~\ref{fig:residual_solution_delta_n32_one}  presents the relationship of residual norm and solution norm, when the magnitude of perturbation $\delta=10^{-j},j=1,2,3,4$ in Tikhonov method. As we can see, the small perturbation will have a small error with the same $\alpha$ basically. Besides, it looks like a $L$ curve, that is  to say, there is a  optimal $\alpha$ keeping the solution norm and residual norm balance. 
	 \subsection{New iterated Tikhonov  implementation}
	 Now we use the new iterated Tikhonov regularization method to solve (\ref{eq:Fredholm1}). Let $\alpha=1e-3$, $a=0.5$,  $\delta=1e-4$, $l=4$, and $n=32$, then we compare the total numeical error by using  classical Tikhonov method (\ref{eq:iterTik}),  Landweber iteration method  (\ref{eq:Land}) and the new  iterated Tikhonov method   (\ref{eq:newiter})  when iteration steps changes, see Figure~\ref{fig:newfilter}. 
	 \begin{figure}[!htbp]
	 	\centering
	 	\includegraphics[width=0.5\textwidth]{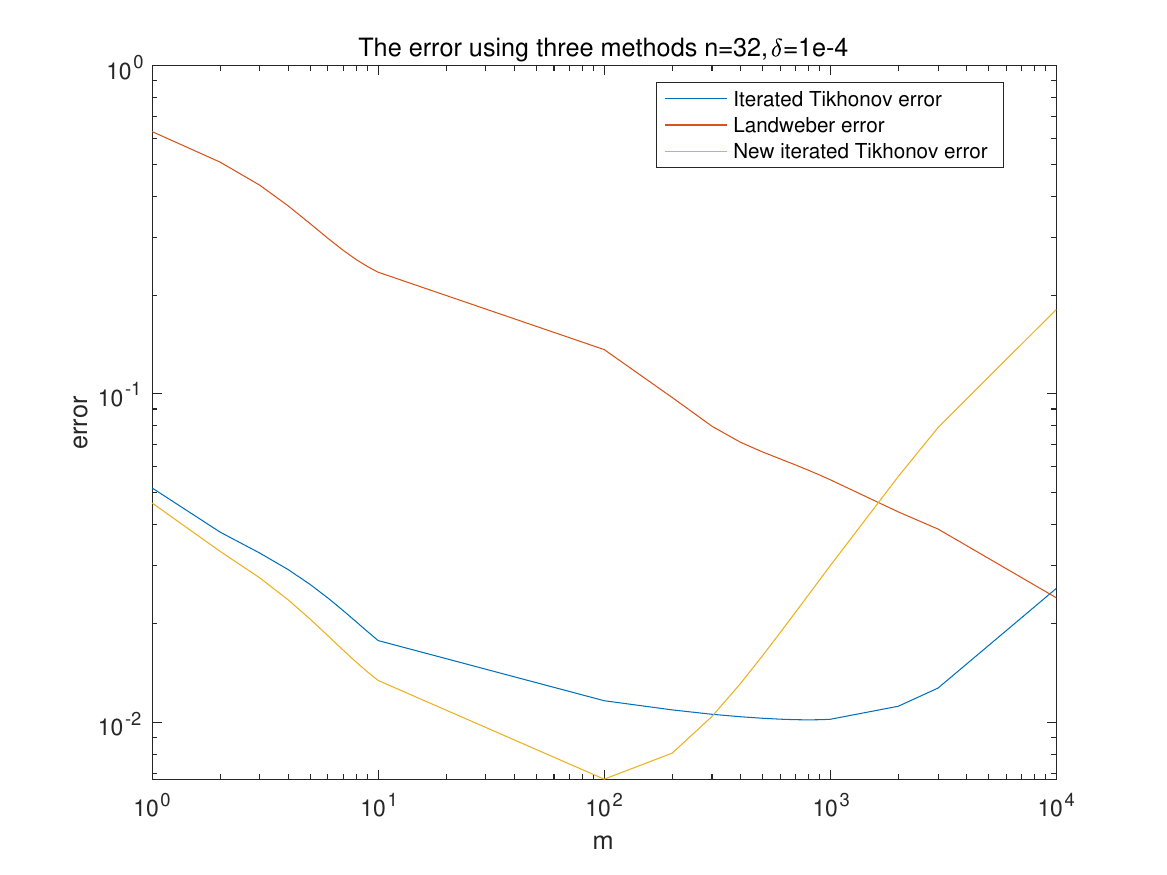}
	 	\caption{Iterated Tikhonov error, Landweber  iteration error  and New iterated Tikhonov error for different $m$} \label{fig:newfilter}
	 \end{figure}
	 From Figure~\ref{fig:newfilter}, we find that the new iterated Tikhonov method only need less iteration steps to get a  smaller error than the other two methods for this problem under these parameters setting, which proves the validity of the proposed method. 
	 Finally, let $l=2,m=100,r=0.8$, $\alpha=1e-0,9*1e-1,1e-3,1e-3$ for different perturbation $\delta$, the following Table~\ref{two} gives the auxiliary specification to prove the efficiency of the new iterated Tikhonov  method.
 \begin{table}[!tbh]
	     \centering
	     \caption{The  numerical error for different $\delta$ by three methods} \label{two}
	 \medskip\small\renewcommand{\arraystretch}{1.15}
	     \begin{tabular}{||c|cccc||}
	    \hline
	           & $\delta=1e-4$ & $\delta=1e-3$ &$\delta=1e-2$&$\delta=1e-1$\\
	         \hline
	         Iterated Tikhonov & 0.2692 & 0.1035&0.0648&0.0051 \\
	        
	        Landweber & 0.8654 & 0.1598 &0.1336&0.1370\\
	       
	         New iterated Tikhonov & 0.2418 & 0.0734 &0.0222&0.0046\\
	         \hline
	     \end{tabular}
	 \end{table}
	 \section{Conclusion}\label{sec5}
	 This paper has shown the iterated fractional weight regularization method is an efficient method to solve the Fredholm integral equation of the first kind. The numerical experiments conducted have validated the accuracy of the proposed method and shown that the comparability with the  classical  Tikhonov method. 

\bigskip


\end{document}